		\newcommand{\NN}{\mathbb{N}}
\DeclarePairedDelimiter\abs{\lvert}{\rvert}		
\DeclarePairedDelimiter\paren{(}{)}			
\DeclarePairedDelimiter\braces{\{}{\}}			
	\newcommand{\Bigparen}[1]{\paren[\Big]{#1}}
	\newcommand{\Bigbraces}[1]{\braces[\Big]{#1}}
\newcommand{\Bigmid}{\mathrel{\Big|}}			
\DeclareMathOperator{\edge}{E}				
\theoremstyle{plain}
\newtheorem{thm}{Theorem}[section]				
\newtheorem{prop}[thm]{Proposition}		
\newtheorem{lem}[thm]{Lemma}						
\newtheorem{cor}[thm]{Corollary}
\newtheorem*{thm*}{Theorem}			\newtheorem*{theorem*}{Theorem}		
\newtheorem*{prop*}{Proposition}		\newtheorem*{proposition*}{Proposition}
\newtheorem*{lem*}{Lemma}			\newtheorem*{lemma*}{Lemma}			
\newtheorem*{cor*}{Corollary}			\newtheorem*{corollary*}{Corollary}
\newtheorem*{qu*}{Question}			\newtheorem*{question*}{Question}
\newtheorem*{conj*}{Conjecture}			\newtheorem*{conjecture*}{Question}
\newtheorem*{fact*}{Fact}
\newtheorem*{claim*}{Claim}
\newtheorem{alphthm}{Theorem}			
\theoremstyle{definition}
\newtheorem*{de*}{Definition}			\newtheorem{definition*}{Definition}	
\newtheorem*{notation*}{Notation}	
\newtheorem*{conv*}{Convention}			\newtheorem*{convention*}{Convention}
\theoremstyle{remark}
\newtheorem{rmk}[thm]{Remark}
\newcommand{\ebdry}{\partial}
\newcommand{\fol}{F\o{}lner\xspace}
\title{Remarks on partitions into expanders}
\author{Federico Vigolo}
\begin{document}

 \begin{abstract}
  In this note we give a short proof that graphs having no linearly small \fol sets can be partitioned into a union of expanders. We use this fact to prove a partition result for graphs admitting linearly small maximal \fol sets and we deduce that a family of such graphs must contain a family of expanders. We also show that the existence of partitions into expanders is a quasi\=/isometry invariant.
 \end{abstract}
 
 \maketitle

\section{Introduction}
This paper revolves around the fact that ``graphs where linearly\=/small subsets have large boundaries can be partitioned into unions of linearly\=/large expanders''. To make the statement clear, we need to introduce some terminology: let $X$ be a finite graph with no multiple edges or loops.
Given a finite set of vertices $A\subseteq X$, the \emph{boundary} of $A$ is the set of edges connecting $A$ to its complement: 
\[
 \partial A=\braces{\{v,w\}\in \edge(X)\mid v\in A,\ w\in X\smallsetminus A}.
\]
Given $\epsilon>0$, a non\=/empty set of vertices $A\subset X$ is a \emph{$\epsilon$\=/\fol set} if $\abs{A}\leq\frac 12\abs{X}$ and $\abs{\partial A}\leq\epsilon\abs{A}$ (here $\abs{X}$ is the number of vertices in $X$). The graph $X$ is an \emph{$\epsilon$\=/expander} if it contains no $\epsilon$\=/\fol sets. Let $\deg(X)\coloneqq \max\braces{\deg(v)\mid v\in X}$ be the \emph{degree} of $X$ and $D\in \NN$ some number. Then $X$ is an $(\epsilon,D)$\=/expander if it is an $\epsilon$\=/expander and $\deg(X)\leq D$. 

 If $X$ is a connected finite graph, it is trivially a $(\frac{2}{\abs{X}},\abs{X})$\=/expander. On the other hand, it is generally hard and very interesting to prove that a graph $X$ is an $(\epsilon,D)$\=/expander for some constants $\epsilon$, $D$ that are fixed \emph{a priori} and do not depend on $\abs{X}$. A \emph{family of expander graphs} is a sequence of $(\epsilon, D)$\=/expanders $(X_n)_{n\in\NN}$ such that $\abs{X_n}\to\infty$. We refer to \cite{HLW06} for more background and motivation.

 A subset of vertices $Y\subset X$ can be made into a subgraph of $X$ by keeping all the edges in $X$ with both endpoints in $Y$ (such a graph is often called a \emph{full subgraph} of $X$). In this paper we will say that $X=X_1\sqcup\cdots\sqcup X_n$ is a \emph{partition} of $X$ if $X_i$ are subgraphs arising from a partition of the set of vertices of $X$. We do \emph{not} require that every edge of $X$ is an edge of $X_i$ for some $i$ (\emph{i.e.} there might be edges connecting the $X_i$'s). We will be particularly interested in partitions where the graphs $X_i$ are $\epsilon$\=/expanders. If $\deg(X)\leq D$, $X_i$ will then automatically be $(\epsilon,D)$\=/expanders.

 Finally, given a constant $\alpha\in (0,1)$ we say that a subset $A\subseteq X$ is \emph{$\alpha$\=/big} if $\abs{A}\geq\alpha\abs{X}$, and that it is \emph{$\alpha$\=/small} if $\abs{A}<\alpha\abs{X}$.
Given nested subsets $A\subseteq Y\subseteq X$, we will avoid confusion by specifying whether $A$ is $\alpha$\=/big \emph{in $Y$} or \emph{in $X$} (and similarly for $\alpha$\=/small).

 In this paper we wish to give an elementary proof of the fact that graphs with a ``small\=/set expansion on a linear scale'' can be partitioned into linearly\=/large expanders:
 
\begin{alphthm}
\label{thm:partition.no.small.folner}
Let $X$ be a finite graph. If $X$ has no $\alpha$\=/small $\epsilon$\=/\fol sets, then it can be partitioned as $X=X_1\sqcup \cdots \sqcup X_k$ where $k\leq\lfloor\frac{1}{\alpha}\rfloor$, all the $X_i$ are $\alpha$\=/big and they are $\delta$\=/expanders for $\delta=\frac \epsilon{4^k}$.
\end{alphthm}

 Apart from the specific constants, Theorem~\ref{thm:partition.no.small.folner} can be easily deduced from a---much more refined---result of Oveis Gharam--Trevisan \cite[Theorem 1.5]{gharan2014partitioning} (see Remark~\ref{rmk:previous.work} for a more detailed comparison). 
 The main contribution of this note is to provide a short self\=/contained proof for Theorem~\ref{thm:partition.no.small.folner} (Section~\ref{sec:appendix:strucure.theorem}). 
 
 We hope that this work will help popularize this basic but not\=/so\=/well\=/known fact. For this reason, we begin our exposition by illustrating a few geometric consequences of Theorem~\ref{thm:partition.no.small.folner} (Section~\ref{sec:related.results}). Namely, we prove a rather general partition result for graphs, and we also use Theorem~\ref{thm:partition.no.small.folner} to show that the property of admitting partitions into linearly large expanders is invariant under quasi\=/isometry. We find that Theorem~\ref{thm:general.partition}, Corollary~\ref{cor:no.folner.sets.have.expanders} and Theorem~\ref{thm:qi.invariance} should be of independent interest.
 
 It is also worth pointing out that the objects of interest of this note (expanders and \fol sets) are very much related to the notion of \emph{separation profile} introduced by Benjamini--Schramm--Tim{\'a}r \cite{BOT10}.
 Some analogues of the techniques explained below proved to be useful in that context as well \cite{coz2019separation,Hum17}.

 \subsection*{Acknowledgments} 
 I am very grateful to the anonymous referee for pointing out a mistake in the proof of Lemma~\ref{lem:cutting.cheeger.sets}. I would like to thank Emmanuel Breuillard for directing me to the paper \cite{gharan2014partitioning} and pointing out the argument explained in Remark~\ref{rmk:previous.work}. I am also thankful to Henry Bradford, Ana Khukhro, Kang Li and Jiawen Zhang for their helpful comments.
 
 This work was supported by the ISF Moked 713510 grant number 2919/19.

\section{Consequences of the main result}\label{sec:related.results}
\subsection{A general partition theorem}
 It is convenient to introduce a piece of notation: given subsets $A,B\subset X$, we let $\ebdry^BA$ be the set of edges in $X$ joining a vertex in $A$ with a vertex in $B\smallsetminus A$ (this is the subset of $\partial A$ consisting of edges that land in $B$).
 It is interesting to combine Theorem~\ref{thm:partition.no.small.folner} with the ``maximal \fol set trick'':
 
 \begin{lem}\label{lem:max.folner.trick}
  Let $X$ be a finite graph and $\epsilon>0$ a fixed constant. If there exists an $\epsilon$\=/\fol set $F$ that is maximal with respect to inclusion, consider the subgraph $Y\coloneqq X\smallsetminus F$. Then every subset $A\subset Y$ such that $\abs{A}\leq \frac 12\abs{X}-\abs{F}$ satisfies $\abs{\partial^YA}>\epsilon\abs{A}$.
 \end{lem}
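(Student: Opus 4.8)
The plan is to argue by contradiction and exploit the maximality of $F$ directly: if some set $A\subset Y$ were an obstruction to the claimed inequality, then adjoining it to $F$ would yield a strictly larger $\epsilon$\=/\fol set. So suppose, for a non\=/empty $A\subset Y$ with $\abs{A}\leq\frac12\abs{X}-\abs{F}$, that $\abs{\partial^YA}\leq\epsilon\abs{A}$ (for $A=\emptyset$ the asserted strict inequality is vacuous, so only non\=/empty $A$ are at stake). I would then consider the set $F'\coloneqq F\cup A\subseteq X$. Since $A\subseteq Y=X\smallsetminus F$, this union is disjoint, whence $\abs{F'}=\abs{F}+\abs{A}\leq\frac12\abs{X}$; in particular $F'$ already satisfies the size constraint in the definition of an $\epsilon$\=/\fol set.

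The crux is to estimate $\abs{\partial F'}$, and for this I would partition the vertices of $X$ into the three blocks $F$, $A$, and $R\coloneqq Y\smallsetminus A=X\smallsetminus F'$. An edge belongs to $\partial F'$ precisely when it joins $F'$ to $R$, so it either joins $F$ to $R$ or joins $A$ to $R$, and these two families are disjoint. The edges joining $A$ to $R$ are exactly those of $\partial^YA$, numbering at most $\epsilon\abs{A}$ by assumption. For the edges joining $F$ to $R$, the key observation is that $\partial F$ splits as the edges from $F$ to $A$ together with the edges from $F$ to $R$; thus the latter number equals $\abs{\partial F}-\abs{\partial^AF}$, reflecting that the former $F$--$A$ edges become \emph{interior} to $F'$. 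Using $\abs{\partial F}\leq\epsilon\abs{F}$ this gives
\[
\abs{\partial F'}=\bigpar{\abs{\partial F}-\abs{\partial^AF}}+\abs{\partial^YA}\leq \epsilon\abs{F}+\epsilon\abs{A}=\epsilon\abs{F'}.
\]

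Hence $F'$ is an $\epsilon$\=/\fol set with $F'\supsetneq F$ (the containment is strict because $A$ is non\=/empty), contradicting the maximality of $F$; this forces $\abs{\partial^YA}>\epsilon\abs{A}$ for every admissible $A$. I do not anticipate a serious obstacle: the whole argument hinges on the bookkeeping in the displayed inequality, and the one point that must be got right is that the edges running between $F$ and $A$---which were counted in $\partial F$---are absorbed into the interior of $F'$ and therefore drop out of $\partial F'$, which is exactly what prevents the boundary from growing faster than the size.
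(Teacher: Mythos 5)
Your proof is correct and is precisely the elementary maximality argument the paper has in mind: the paper does not write out a proof of this lemma but defers to the reference it cites, and the standard argument there is exactly your ``adjoin $A$ to $F$ and contradict maximality'' computation, with the key bookkeeping point (the $F$--$A$ edges becoming interior to $F'$) handled correctly. The only quibble is that for $A=\emptyset$ the strict inequality is false rather than vacuous, but this is immaterial given the paper's convention that \fol\=/type sets are non\=/empty.
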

 
 This sort of maximality argument is used fairly often in the theory of von Neumann algebras and it was also a key ingredient in \cite{khukhro2019structure}. The proof of Lemma \ref{lem:max.folner.trick} is completely elementary and can also be found in \cite[Lemma~3.1]{khukhro2019structure}. Together with Theorem~\ref{thm:partition.no.small.folner}, the maximality trick implies the following structure theorem:

 \begin{thm}\label{thm:general.partition}
  Let $X$ be a finite graph. If $X$ has a maximal $\epsilon$\=/\fol set $F$ that is $\alpha$\=/small for some $\alpha<\frac{1}{2}$, then there exists $\delta=\delta(\epsilon,\alpha)$ such that $X$ can be partitioned as 
  \[
  X=F\sqcup Y_1\sqcup\cdots \sqcup Y_k
  \]
  where the graphs $Y_i$ are $\delta$\=/expanders and are $(\frac{1}{2}-\alpha)$\=/big in $X$.
 \end{thm}
 \begin{proof}
  Let $F$ be an $\alpha$\=/small maximal $\epsilon$\=/\fol set and let $Y\coloneqq X\smallsetminus F$. If $A\subset Y$ is an $\epsilon$\=/\fol set of $Y$, then by Lemma~\ref{lem:max.folner.trick} we must have:
  \[
   \abs{A}>\frac 12\abs{X}- \abs{F}=\abs{Y}-\frac 12\abs{X}>\Bigparen{1-\frac{1}{2(1-\alpha)}}\abs{Y}=\frac 12 \Bigparen{\frac{1-2\alpha}{1-\alpha}}\abs{Y}.
  \]
  That is, $Y$ has no $\frac 12 \Bigparen{\frac{1-2\alpha}{1-\alpha}}$\=/small $\epsilon$\=/\fol sets. We can hence apply  Theorem~\ref{thm:partition.no.small.folner} to obtain a partition of $Y$ into $\delta$\=/expanders. 
 \end{proof}

\begin{cor}\label{cor:no.folner.sets.have.expanders}
 Let $(X_n)_{n\in\NN}$ be a sequence of finite graphs with $\deg(X_n)\leq D$ and $\abs{X_n}\to\infty$. Given a constant $\epsilon>0$, either there exist $\epsilon$\=/\fol sets $F_n\subset X_n$ such that $\limsup \frac{\abs{F_n}}{\abs{X_n}}=\frac{1}{2}$ or there exist $\alpha,\delta>0$ and $\alpha$\=/big subgraphs $Y_n\subseteq X_n$ that are $(\delta,D)$\=/expanders (these options are non\=/exclusive). 
\end{cor}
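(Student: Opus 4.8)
The plan is to establish the stated dichotomy by showing that if the first alternative fails then the second holds. For each $n$ write $m_n$ for the largest cardinality of an $\epsilon$\=/\fol set in $X_n$, with the convention $m_n=0$ when $X_n$ has no such set. Since every $\epsilon$\=/\fol set $A$ satisfies $\abs{A}\leq\frac12\abs{X_n}$ by definition, we always have $\frac{\abs{F_n}}{\abs{X_n}}\leq\frac12$, and choosing $F_n$ to be a largest $\epsilon$\=/\fol set shows that the first alternative holds exactly when $\limsup_n\frac{m_n}{\abs{X_n}}=\frac12$. Its negation is therefore $\limsup_n\frac{m_n}{\abs{X_n}}<\frac12$, and I would fix once and for all a threshold $\alpha_0$ with $\limsup_n\frac{m_n}{\abs{X_n}}<\alpha_0<\frac12$. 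By definition of $\limsup$ there is an $N$ such that, for every $n\geq N$, each $\epsilon$\=/\fol set of $X_n$ has size at most $m_n<\alpha_0\abs{X_n}$ and is thus $\alpha_0$\=/small.

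For each $n\geq N$ I would distinguish two cases. If $X_n$ has no $\epsilon$\=/\fol set then it is itself an $\epsilon$\=/expander and I put $Y_n\coloneqq X_n$, which is $1$\=/big. Otherwise $X_n$, being finite, has an $\epsilon$\=/\fol set $F_n$ that is maximal with respect to inclusion, and by the previous paragraph $F_n$ is $\alpha_0$\=/small with $\alpha_0<\frac12$. Applying Theorem~\ref{thm:general.decomposition} with the single parameter $\alpha_0$ for all such $n$---so that the resulting $\delta=\delta(\epsilon,\alpha_0)$ is one fixed constant independent of $n$---produces a decomposition $X_n=F_n\sqcup Y_1^n\sqcup\cdots\sqcup Y_{k_n}^n$ whose pieces are $\delta$\=/expanders that are $(\frac12-\alpha_0)$\=/big in $X_n$, and I set $Y_n\coloneqq Y_1^n$. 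With $\alpha\coloneqq\frac12-\alpha_0>0$, in both cases $Y_n$ is an $\alpha$\=/big subgraph of $X_n$; it is a $\delta$\=/expander (directly in the second case, and in the first because an $\epsilon$\=/expander is a $\delta$\=/expander whenever $\delta\leq\epsilon$, which holds since $\delta=\delta(\epsilon,\alpha_0)\leq\epsilon$); and $\deg(Y_n)\leq\deg(X_n)\leq D$, so $Y_n$ is a $(\delta,D)$\=/expander. The finitely many indices $n<N$ are harmless: for each such $n$ I take $Y_n$ to be a single vertex (vacuously a $(\delta,D)$\=/expander) and shrink $\alpha$ to $\min\{\frac12-\alpha_0,\ \min_{n<N}\abs{X_n}^{-1}\}$, still a positive constant.

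The one point that requires care is the uniformity of the output constants: the content of the corollary is that a single $\delta>0$ and a single $\alpha>0$ serve the whole sequence, and this rests entirely on selecting one threshold $\alpha_0<\frac12$ that, thanks to the failure of the first alternative, simultaneously bounds the maximal $\epsilon$\=/\fol sets of all large $X_n$. Once this $\alpha_0$ is fixed, Theorem~\ref{thm:general.decomposition} delivers the same $\delta$ for every $n$, and the remaining steps---the reduction of the first alternative to the quantity $\limsup_n\frac{m_n}{\abs{X_n}}$, the degenerate case of a graph with no \fol set, and the passage from the full decomposition to a single expander piece---are routine.
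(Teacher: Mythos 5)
Your proposal is correct and follows exactly the route the paper intends: negate the first alternative to get a uniform threshold $\alpha_0<\frac12$ bounding all maximal $\epsilon$\=/\fol sets for large $n$, then invoke Theorem~\ref{thm:general.decomposition} with this single $\alpha_0$ to extract one uniformly big $\delta$\=/expander piece from each $X_n$ (the paper leaves this deduction implicit). Your handling of the degenerate cases (graphs with no \fol set, and the finitely many small indices) and your emphasis on the uniformity of $\delta$ and $\alpha$ are exactly the points that make the corollary ``immediate.''
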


Note in particular that the graphs $Y_n$ in Corollary~\ref{cor:no.folner.sets.have.expanders} would be a family of expander graphs. A sample application of this result could be proving that some metric space $Y$ contains families of expanders: it may be possible to prove that $Y$ contains some graphs $X_n$ that do not have \fol sets of size $\approx \abs{X_n}/2$, and Corollary~\ref{cor:no.folner.sets.have.expanders} would then immediately imply that $Y$ contains some genuine expanders as well. This is relevant \emph{e.g.} in the study of coarse embeddings into Hilbert spaces \cite{Gro03,khukhro2019structure,Tes09}.

\subsection{Invariance under quasi-isometry }
Given $L,A>0$, a $(L,A)$\=/quasi-isometry is a function between metric spaces $f\colon (X,d_X)\to(Y,d_Y)$ such that
\[
 \frac 1L d_X(x,x')-A\leq d_Y(f(x),f(x'))\leq Ld_X(x,x')+A
\]
for every $x,x'\in X$, and such that for every $y\in Y$ there is an $x\in X$ with $d_Y(f(x),y)\leq A$. This notion is a cornerstone of geometric group theory \cite{Gro93}.

Connected graphs can be seen as metric spaces where the distance between two vertices is the length of the shortest path connecting them. 
It is well\=/known that quasi\=/isometries preserve expansion. More precisely, one can prove the following lemma (see the proof of \cite[Lemma 2.7.5]{Vig18Thes}\footnote{The current setting is somewhat different from that of \cite{Vig18Thes}. Most notably, \cite{Vig18Thes} is concerned with \emph{coarse equivalences} and \emph{vertex boundaries}. Yet, it is easy to adapt the argument  outlined there to our situation.
}):

\begin{lem}\label{lem:folner.is.preserved}
 For every $\epsilon>0$ there exists an $\eta=\eta(\epsilon,D,L,A)$ such that if $X$ and $Y$ are connected graphs with degree bounded by $D$ and $f\colon X\to Y$ is an $(L,A)$\=/quasi\=/isometry, then for every subset $F\subset Y$ with $\abs{\partial F}\leq \eta\abs{F}$ the preimage $T=f^{-1}(F)$ is non\=/empty and satisfies $\abs{\partial T}\leq \epsilon\abs{T}$.
\end{lem}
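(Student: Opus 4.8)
The plan is to transfer the \fol inequality $\abs{\partial F}\leq\eta\abs{F}$ to the preimage $T=f^{-1}(F)$ by controlling separately the numerator $\abs{\partial T}$ and the denominator $\abs{T}$ in terms of the corresponding quantities for $F$. Two standard consequences of the hypotheses do all the work. First, since $f$ is an $(L,A)$\=/quasi\=/isometry, $f(x)=f(x')$ forces $\frac1L d_X(x,x')-A\leq 0$, so every fibre $f^{-1}(y)$ lies in a single ball of radius $LA$; as $\deg(X)\leq D$, this gives $\abs{f^{-1}(y)}\leq C_1$ for a constant $C_1=C_1(D,L,A)$, and hence $\abs{f^{-1}(S)}\leq C_1\abs{S}$ for every $S\subseteq Y$. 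Second, coarse surjectivity lets me choose for each $v\in Y$ a point $x_v\in X$ with $d_Y(f(x_v),v)\leq A$; since a ball of radius $A$ in $Y$ contains at most $C_2=C_2(D,A)$ vertices, the assignment $v\mapsto x_v$ is at most $C_2$\=/to\=/one. I will also use that the set $\partial_V F$ of vertices of $F$ adjacent to $Y\smallsetminus F$ satisfies $\abs{\partial_V F}\leq\abs{\partial F}$ (each boundary edge has exactly one endpoint in $F$), and that an $r$\=/neighbourhood $N_r(S)$ of any vertex set $S$ has at most $C(D,r)\abs{S}$ elements.

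For the numerator, consider an edge $\{x,x'\}\in\partial T$, say $x\in T$ and $x'\notin T$. Then $f(x)\in F$, $f(x')\notin F$, and $d_Y(f(x),f(x'))\leq L+A$ because $x,x'$ are adjacent; any geodesic from $f(x)$ to $f(x')$ crosses $\partial F$, so $f(x)\in N_{L+A}(\partial_V F)$. Thus the $T$\=/endpoints of boundary edges all lie in $f^{-1}(N_{L+A}(\partial_V F))$, a set of size at most $C_1\,C(D,L+A)\abs{\partial_V F}\leq C_3\abs{\partial F}$ with $C_3\coloneqq C_1\,C(D,L+A)$. Since each vertex has degree at most $D$ and every edge of $\partial T$ has exactly one endpoint in $T$, counting boundary edges by their $T$\=/endpoint yields $\abs{\partial T}\leq D\,C_3\abs{\partial F}$.

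The delicate step is bounding $\abs{F}$ from above by $\abs{T}$, and this is where the smallness of $\abs{\partial F}$ must enter: coarse surjectivity alone only produces preimages near $F$, not inside it. I therefore split $F=F^{\circ}\sqcup(F\smallsetminus F^{\circ})$, where $F^{\circ}=\braces{v\in F: d_Y(v,Y\smallsetminus F)>A}$ is the deep interior. For $v\in F^{\circ}$ the chosen point $x_v$ satisfies $f(x_v)\in B_Y(v,A)\subseteq F$, so $x_v\in T$; as $v\mapsto x_v$ is at most $C_2$\=/to\=/one, $\abs{F^{\circ}}\leq C_2\abs{T}$. The remaining layer $F\smallsetminus F^{\circ}$ is contained in $N_A(\partial_V F)$, hence has size at most $C(D,A)\abs{\partial F}$. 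Combining, $\abs{F}\leq C_2\abs{T}+C(D,A)\abs{\partial F}$, and using $\abs{\partial F}\leq\eta\abs{F}$ this rearranges, as soon as $C(D,A)\eta\leq\tfrac12$, to $\abs{F}\leq 2C_2\abs{T}$. Feeding this into the numerator bound gives $\abs{\partial T}\leq D\,C_3\abs{\partial F}\leq D\,C_3\,\eta\abs{F}\leq 2D\,C_2C_3\,\eta\abs{T}$, so taking $\eta=\eta(\epsilon,D,L,A)$ smaller than both $\epsilon/(2D\,C_2C_3)$ and $1/(2C(D,A))$ forces $\abs{\partial T}\leq\epsilon\abs{T}$. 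I expect the main obstacle to be exactly this volume comparison: absorbing the boundary\=/layer term requires choosing $\eta$ small in a way that must remain compatible with the constant appearing in the numerator estimate, and it is the hypothesis on $\abs{\partial F}$ that makes this absorption possible at all.
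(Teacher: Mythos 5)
Your proof is correct and complete. Note that the paper itself does not prove this lemma --- it defers to \cite[Lemma~2.7.5]{Vig18Thes} --- so there is no in-text argument to compare against; what you have written is a valid self-contained substitute. The two standard ingredients (bounded fibres from the lower quasi-isometry inequality, bounded multiplicity of a coarse section from coarse surjectivity) are deployed correctly, and you rightly identify the one genuinely non-formal point: coarse surjectivity only gives preimages of the \emph{deep interior} $F^{\circ}$, so the comparison $\abs{F}\lesssim\abs{T}$ forces you to absorb the boundary layer $F\smallsetminus F^{\circ}\subseteq N_A(\partial_V F)$ using the hypothesis $\abs{\partial F}\leq\eta\abs{F}$, which is exactly why $\eta$ must be chosen small at that stage and not merely at the end. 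Two minor points you may as well make explicit: the claim that $f(x)\in N_{L+A}(\partial_V F)$ uses connectedness of $Y$ (so that a path from $f(x)$ to $f(x')$ exists and must cross $\partial F$), which is part of the hypotheses; and in the degenerate case $F^{\circ}=\emptyset$ your inequality $\abs{F}\leq C(D,A)\eta\abs{F}\leq\tfrac12\abs{F}$ forces $F=\emptyset$, so $T$ is never empty when $F$ is not, and the final division by $\abs{T}$ is legitimate.
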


\begin{rmk}
 The value of $\eta$ degrades exponentially fast as a function of the quasi\=/isometry constants. The proof in \cite[Lemma 2.7.5]{Vig18Thes} works for $\eta=\epsilon D^{- p(L,A)}$ where $p$ is some polynomial.
\end{rmk}

If $f\colon X\to Y$ is an $(L,A)$\=/quasi\=/isometry between graphs, then any two vertices of $X$ that are at distance $L(A+2)$ or more are sent to distinct points in $Y$. It follows that if $X$ has degree bounded by $D$ then 
\[
\abs{f^{-1}(y)}\leq( \text{cardinality of a ball of radius $L(A+2)$}) \leq D^{L(A+2)+1}.
\] 
On the other hand, since every point in $Y$ is within distance $A$ from $f(X)$, it follows that if $Y$ has degree bounded by $D$ then 
\[
\abs{Y}\leq D^{A+1}\abs{f(X)}\leq D^{A+1}\abs{X}. 
\]
Combining these inequalities one can prove the following:

\begin{lem}\label{lem:small.is.preserved}
 Let $X$ and $Y$ be connected graphs with degree bounded by $D$ and $f\colon X\to Y$ an $(L,A)$\=/quasi\=/isometry. For any $\alpha>0$ let $\beta\coloneqq D^{-L(A+2)-A-2}\alpha$. Then the preimage of a $\beta$\=/small subset of $Y$ is $\alpha$\=/small in $X$.
\end{lem}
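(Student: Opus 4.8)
The plan is to control $\abs{f^{-1}(S)}$ for a $\beta$\=/small set $S\subseteq Y$ by combining the two estimates derived immediately before the statement: the fiberwise bound $\abs{f^{-1}(y)}\leq D^{L(A+2)+1}$, valid for every $y\in Y$ because two vertices at distance $\geq L(A+2)$ have distinct images and so a fiber is contained in a single ball of radius $L(A+2)$; and the comparison $\abs{Y}\leq D^{A+1}\abs{X}$ of the two vertex sets, which comes from covering $Y$ by radius\=/$A$ balls centered on the coarsely dense image $f(X)$. The value of $\beta$ is engineered precisely so that the two powers of $D$ produced by these bounds cancel against it.

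First I would decompose the preimage into its fibers, $f^{-1}(S)=\bigsqcup_{y\in S}f^{-1}(y)$, so that summing the fiberwise bound over $S$ gives
\[
\abs{f^{-1}(S)}=\sum_{y\in S}\abs{f^{-1}(y)}\leq \abs{S}\cdot D^{L(A+2)+1}.
\]
Since $S$ is $\beta$\=/small in $Y$ we have $\abs{S}<\beta\abs{Y}$, and substituting this together with $\abs{Y}\leq D^{A+1}\abs{X}$ yields
\[
\abs{f^{-1}(S)}<\beta\, D^{A+1}\abs{X}\cdot D^{L(A+2)+1}=\beta\, D^{L(A+2)+A+2}\abs{X}.
\]

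Finally I would insert $\beta= D^{-L(A+2)-A-2}\alpha$, at which point the powers of $D$ cancel exactly and the right\=/hand side collapses to $\alpha\abs{X}$, so $\abs{f^{-1}(S)}<\alpha\abs{X}$ and $f^{-1}(S)$ is $\alpha$\=/small in $X$, as claimed. The argument is purely a matter of bookkeeping once the two geometric bounds are in hand, so I do not expect any genuine obstacle; the only point demanding care is keeping the two exponents $L(A+2)+1$ and $A+1$ straight, since their sum must match the exponent $L(A+2)+A+2$ built into the definition of $\beta$.
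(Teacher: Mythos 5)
Your proposal is correct and is exactly the combination the paper intends: it derives the same two bounds $\abs{f^{-1}(y)}\leq D^{L(A+2)+1}$ and $\abs{Y}\leq D^{A+1}\abs{X}$ in the paragraph preceding the lemma and then states that combining them proves the result, which is precisely the fiberwise summation and cancellation of exponents you carry out. No gaps; the bookkeeping checks out.
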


The following is now immediate:

\begin{prop}\label{prop:qi.invariance}
 For every $\epsilon,\alpha,D,L,A>0$ there exist $\eta,\beta>0$ such that if $X$ and $Y$ are connected graphs with degree bounded by $D$, $X$ has no $\alpha$\=/small $\epsilon$\=/\fol set and $f\colon X\to Y$ is an $(L,A)$\=/quasi\=/isometry, then $Y$ has no $\beta$\=/small $\eta$\=/\fol sets.
\end{prop}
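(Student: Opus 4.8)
The plan is to prove the contrapositive: assuming $Y$ carries a $\beta$\=/small $\eta$\=/\fol set $F$, I will exhibit an $\alpha$\=/small $\epsilon$\=/\fol set inside $X$, contradicting the hypothesis. The only real work is to pin down $\eta$ and $\beta$ and then feed $F$ through the two preceding lemmas. Concretely, I would let $\eta=\eta(\epsilon,D,L,A)$ be the constant produced by Lemma~\ref{lem:folner.is.preserved} (possibly shrunk further, see below) and set $\beta\coloneqq D^{-L(A+2)-A-2}\,\alpha'$, where $\alpha'\coloneqq\min\{\alpha,\tfrac12\}$, matching the constant in Lemma~\ref{lem:small.is.preserved}. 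Replacing $\alpha$ by $\alpha'$ is harmless because any $\alpha'$\=/small set is a fortiori $\alpha$\=/small, so an $\alpha'$\=/small $\epsilon$\=/\fol set is in particular an $\alpha$\=/small one.

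The core of the argument is a single application of each lemma to the preimage $T\coloneqq f^{-1}(F)$. Since $F$ is $\eta$\=/\fol we have $\abs{\partial F}\leq\eta\abs{F}$, so Lemma~\ref{lem:folner.is.preserved} gives $\abs{\partial T}\leq\epsilon\abs{T}$. Since $F$ is $\beta$\=/small, Lemma~\ref{lem:small.is.preserved} gives that $T$ is $\alpha'$\=/small in $X$, i.e. $\abs{T}<\alpha'\abs{X}\leq\tfrac12\abs{X}$; in particular $T$ satisfies the size constraint $\abs{T}\le\tfrac12\abs{X}$ required of a \fol set. Thus, provided $T$ is non\=/empty, it is an $\alpha'$\=/small (hence $\alpha$\=/small) $\epsilon$\=/\fol set of $X$, and the hypothesis is contradicted.

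The one point that needs genuine care---and the step I expect to be the main obstacle---is verifying that $T=f^{-1}(F)$ is non\=/empty, since a priori $F$ could lie entirely in the part of $Y$ not hit by $f$. I would handle this by shrinking $\eta$ slightly, so that in addition $\eta<D^{-(A+1)}$: if $F$ were disjoint from $f(X)$, then every vertex $y\in F$ would be joined by a path of length at most $A$ to some $f(x)\notin F$, and that path must cross $\partial F$; hence every vertex of $F$ lies within distance $A$ of the $F$\=/endpoint of some boundary edge. As $Y$ has degree at most $D$, each such endpoint accounts for at most $D^{A+1}$ vertices, so $\abs{F}\leq D^{A+1}\abs{\partial F}\leq D^{A+1}\eta\abs{F}$, forcing $\eta\geq D^{-(A+1)}$ and contradicting the choice of $\eta$. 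Therefore $F\cap f(X)\neq\emptyset$ and $T\neq\emptyset$. Note that shrinking $\eta$ does not break the previous step, since the hypothesis of Lemma~\ref{lem:folner.is.preserved} only becomes easier to satisfy for a smaller constant. Apart from this elementary covering estimate, the proposition really is ``immediate'' from the two lemmas.
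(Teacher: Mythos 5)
Your proof is correct and follows the same route as the paper, which simply declares the proposition an immediate consequence of Lemma~\ref{lem:folner.is.preserved} and Lemma~\ref{lem:small.is.preserved}. The extra checks you supply --- capping $\alpha$ at $\tfrac12$ so the preimage meets the size constraint in the definition of a \fol set, and shrinking $\eta$ below $D^{-(A+1)}$ to force $f^{-1}(F)\neq\emptyset$ --- are details the paper leaves implicit, and your covering argument for non\=/emptiness is sound.
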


We can use Proposition~\ref{prop:qi.invariance} and Theorem~\ref{thm:partition.no.small.folner} to show that the existence of partitions into linearly large expanders is invariant under quasi\=/isometry. More precisely, we can prove the following:

\begin{thm}\label{thm:qi.invariance}
 Fix $\epsilon,\alpha,D,L,A>0$ and $\eta,\beta>0$ as in Proposition~\ref{prop:qi.invariance}. Let $X_n$ and $Y_n$ be two sequences of finite graphs with degree bounded by $D$ and $f_n\colon X_n\to Y_n$ be $(L,A)$\=/quasi isometries. If each $X_n$ can be partitioned in $(2\alpha)$\=/large $\epsilon$\=/expanders then $Y_n$ can be partitioned into $\beta$\=/large $\delta$\=/expanders where $\delta=4^{-\lfloor1/\beta\rfloor}\eta$.
\end{thm}
\begin{proof}
 Since $X_n$ can be partitioned in $(2\alpha)$\=/large $\epsilon$\=/expanders, it does not contain any $\alpha$\=/small $\epsilon$\=/\fol set. Proposition~\ref{prop:qi.invariance} implies that $Y_n$ does not contain $\beta$\=/small $\eta$\=/\fol sets. The claim now follows from Theorem~\ref{thm:partition.no.small.folner}.
\end{proof}

\begin{rmk}
It would be fairly complicated to directly prove Theorem~\ref{thm:qi.invariance} by ignoring Theorem~\ref{thm:partition.no.small.folner} altogether. This is because quasi\=/isometries are not bijections and the techniques needed to prove Lemmata~\ref{lem:folner.is.preserved} and \ref{lem:small.is.preserved} are ill suited for constructing \emph{partitions} of the codomains.
\end{rmk}

\section{Proof of Theorem~\ref{thm:partition.no.small.folner}}
\label{sec:appendix:strucure.theorem}

Recall that, given subsets $A,B\subset X$, we denote by $\ebdry^BA$ the set of edges in $X$ joining a vertex in $A$ with a vertex in $B\smallsetminus A$. Define the \emph{Cheeger constant} of a finite graph as
\[
 h(X)\coloneqq \min\Bigbraces{\frac{\abs{\partial A}}{\abs A}\Bigmid A\subset X,\ 0<\abs{A}\leq\frac{1}{2}\abs{X}}.
\]
Note that $X$ is an $\epsilon$\=/expander if and only if $h(X)>\epsilon$.

\begin{lem}\label{lem:cutting.cheeger.sets}
 Given any $\epsilon>0$ and a graph $X$ with Cheeger constant $h\coloneqq h(X)\leq \frac\epsilon 2$, let $Y\subset X$ be a set with $0<\abs{Y}\leq \frac{1}{2}\abs{X}$ and $\frac{\abs{\ebdry Y}}{\abs{Y}}=h$. Then every $(\epsilon/4)$\=/\fol set of $Y$ is an $\epsilon$\=/\fol set of $X$.
 
 Furthermore, letting $Z\coloneqq X\smallsetminus Y$ we also have that every $(\epsilon/4)$\=/\fol set of $Z$ is an $\epsilon$\=/\fol set of $X$.
\end{lem}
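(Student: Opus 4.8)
The plan is to verify the two defining conditions of an $\epsilon$\=/\fol set of $X$ separately, for both assertions at once, treating the size bound as routine and the boundary bound as the crux. The uniform idea is the disjoint splitting $\partial A=\partial^YA\sqcup\partial^ZA$ of the boundary of $A$ taken in $X$ into the edges landing in $Y$ and those landing in $Z$. For a \fol set of $Y$ the first summand is the boundary of $A$ inside the subgraph $Y$, and for a \fol set of $Z$ the second summand is the boundary inside $Z$; in each case that summand is $\leq\lambda\epsilon\abs A$ by hypothesis, so the whole point is to control the \emph{other} summand using the minimality of $Y$.

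First I would dispatch the size condition. If $A$ is a $\lambda\epsilon$\=/\fol set of $Y$ then $\abs A\leq\frac12\abs Y\leq\frac14\abs X\leq\frac12\abs X$, while if $A\subseteq Z$ then $\abs A\leq\frac12\abs Z\leq\frac12\abs X$; in either case the size requirement holds with room to spare, so everything reduces to proving $\abs{\partial A}\leq\epsilon\abs A$. For a $\lambda\epsilon$\=/\fol set $A\subseteq Y$ I would feed the competitor $Y\smallsetminus A$ into the Cheeger constant: since $0<\abs{Y\smallsetminus A}\leq\abs Y\leq\frac12\abs X$ it is admissible, so $\abs{\partial(Y\smallsetminus A)}\geq h\abs{Y\smallsetminus A}$. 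Expanding $\partial(Y\smallsetminus A)$ as the edges from $Y\smallsetminus A$ to $Z$ together with those to $A$, and subtracting the identity $\abs{\partial Y}=h\abs Y$ (valid because $Y$ realizes $h$), the edges from $Y\smallsetminus A$ to $Z$ cancel and one is left with
\[
\abs{\partial^ZA}\leq\abs{\partial^YA}+h\abs A .
\]
Hence $\abs{\partial A}\leq 2\abs{\partial^YA}+h\abs A\leq(2\lambda\epsilon+h)\abs A$.

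The set $A\subseteq Z$ is handled by the mirror computation, but more favorably: the natural competitor is now $Y\cup A$, and because enlarging the minimizer can only raise the ratio the same cancellation carries the opposite sign, giving $\abs{\partial^YA}\leq\abs{\partial^ZA}-h\abs A\leq\abs{\partial^ZA}$ and therefore
\[
\abs{\partial A}\leq 2\abs{\partial^ZA}\leq 2\lambda\epsilon\abs A=\tfrac34\epsilon\abs A<\epsilon\abs A .
\]
One subtlety I would have to address here is that $Y\cup A$ may violate $\abs{Y\cup A}\leq\frac12\abs X$; in that regime I would instead run the comparison through the complementary set $Z\smallsetminus A=X\smallsetminus(Y\cup A)$, whose cut equals that of $Y\cup A$, so that an admissible competitor is always available.

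The main obstacle is the leftover term $h\abs A$ in the first estimate. Unlike the $Z$\=/case it enters with the unfavorable sign, so to reach $\abs{\partial A}\leq\epsilon\abs A$ I must absorb it, i.e.\ play the budget $(1-2\lambda)\epsilon=\tfrac14\epsilon$ left by the choice $\lambda=3/8$ against the size of $h=h(X)$; this is precisely the step where the hypothesis that $Y$ \emph{realizes} the Cheeger constant (rather than being an arbitrary small\=/boundary set) must be exploited to bound $h$ relative to $\epsilon$. I expect this bookkeeping around the $h\abs A$ term, together with the size caveat in the $Z$\=/case, to be the delicate part of the argument, while the two minimality comparisons themselves are short.
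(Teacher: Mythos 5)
Your two minimality comparisons are set up correctly and coincide with the ones in the paper's proof: for $A\subseteq Y$ the competitor $Y\smallsetminus A$ gives $\abs{\ebdry^Z A}\leq\abs{\ebdry^Y A}+h\abs{A}$, and for $A\subseteq Z$ with $\abs{Y\cup A}\leq\frac12\abs{X}$ the competitor $Y\cup A$ gives $\abs{\ebdry^Y A}\leq\abs{\ebdry^Z A}-h\abs{A}$, so that half of the ``furthermore'' does go through as you say. But the proposal is not a proof: the step you defer --- absorbing the leftover $h\abs{A}$ by ``bounding $h$ relative to $\epsilon$'' --- is the entire content of the lemma, and it cannot be carried out as described. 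The statement is quantified over all $\epsilon>0$ and nothing in the hypotheses forces $h\leq(1-2\lambda)\epsilon=\epsilon/4$; the only competitor relating $h$ to the data of $A$ is $A$ itself, which gives $h\abs{A}\leq\abs{\ebdry^X A}$, and feeding that into your additive bound $\abs{\ebdry^X A}\leq 2\abs{\ebdry^Y A}+h\abs{A}$ produces only the tautology $\abs{\ebdry^X A}\leq 2\abs{\ebdry^Y A}+\abs{\ebdry^X A}$. The paper instead aims for the multiplicative statement $\abs{\ebdry^Y A}\geq\lambda\abs{\ebdry^X A}$ (so that $\abs{\ebdry^X A}\leq\lambda^{-1}\abs{\ebdry^Y A}\leq\epsilon\abs{A}$), exploiting $h\leq\abs{\ebdry^X A}/\abs{A}$ together with $\abs{A}\leq\frac12\abs{Y}$ inside the same inequality; that is the ingredient your write-up never invokes. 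A second, smaller gap: in the subcase $\abs{Y\cup A}>\frac12\abs{X}$ you pass to the competitor $Z\smallsetminus A$, but the denominator then becomes $\abs{Z}-\abs{A}$ rather than $\abs{Y}+\abs{A}$ and the sign of the $h$\=/term flips back to the unfavorable one: one gets $\abs{\ebdry^Y A}\leq\abs{\ebdry^Z A}+h(2\abs{Y}+\abs{A}-\abs{X})$, where the correction can be as large as $h\abs{A}$. So that case is not ``more favorable'' and lands you back at the same obstacle.

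I would add that the obstacle you flagged is not mere bookkeeping. Take $X$ to be four $n$\=/cliques $A_0,B_0,C_0,D_0$ with $D_0$ joined to each of the other three by a single edge. Then $h(X)=1/n$ is realized by $Y=A_0\sqcup B_0$ (of size exactly $\frac12\abs{X}$), the set $A_0\subset Y$ satisfies $\ebdry^Y A_0=\emptyset$ and is therefore a $\lambda\epsilon$\=/\fol set of $Y$ for \emph{every} $\epsilon>0$, yet $\abs{\ebdry^X A_0}=1>\epsilon\abs{A_0}$ whenever $\epsilon<1/n$. Hence no bookkeeping can eliminate the dependence on $h$: some relation between $h(X)$ and $\epsilon$ (which is available in the recursive application, where the lemma is only invoked when $h\leq\epsilon$, but is absent from the statement) is genuinely needed. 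You should also check the paper's own rearrangement leading to $-t^2h\leq(2r-1)\abs{\ebdry^X A}/\abs{A}$ against this example --- a direct computation from \eqref{eq.boundary.of.difference} gives $-h\leq(2r-1)\abs{\ebdry^X A}/\abs{A}$, without the factor $t^2$ --- before relying on the stated constant $\lambda=3/8$.
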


\begin{proof}
 Let $A\subset Y$ be a set such that $\abs{\partial^XA}>\epsilon\abs{A}$, we need to show that it is \emph{not} an $(\epsilon/4)$\=/\fol set of $Y$. Note that $\ebdry^XA=\ebdry^YA\sqcup\ebdry^ZA$ and that $\ebdry^ZA\subset\ebdry^XY$. We have:
 \[
 \ebdry^X (Y\smallsetminus A)= \paren{\ebdry^X Y\smallsetminus \ebdry^Z A}
    \sqcup \paren{\ebdry^A (Y\smallsetminus A)},
 \]
 and hence
 \[
  \abs{\ebdry^X(Y\smallsetminus A)}
  = \abs{\ebdry^X Y}-\abs{\ebdry^Z A} +\abs{\ebdry^A (Y\smallsetminus A)}
    = \abs{\ebdry^X Y}-\abs{\ebdry^Z A}+ \abs{\ebdry^Y A}
 \]
 because $\ebdry^Y A=\ebdry^A (Y\smallsetminus A)$. By minimality, we thus obtain:
 \begin{equation}\label{eq.boundary.of.difference}
  h=\frac{\abs{\ebdry^X Y}}{\abs{Y}}\leq
  \frac{\abs{\ebdry^X (Y\smallsetminus A)}}{\abs{Y\smallsetminus A}}
    = \frac{\abs{\ebdry^X Y}-\abs{\ebdry^Z A}+ \abs{\ebdry^Y A}}{\abs{Y\smallsetminus A}}.
 \end{equation}

 For convenience, let $t\coloneqq \abs{A}/\abs{Y}$ and let $r\coloneqq \abs{\ebdry^Y A}/\abs{\ebdry^X A}$. 
 With the newly introduced notation, \eqref{eq.boundary.of.difference} becomes:
 \[
  h\leq \frac{\abs{\ebdry^X Y}-(1-r)\abs{\ebdry^X A}+ r\abs{\ebdry^X A}}{(1-t)\abs{Y}}
  =\frac{h}{1-t}+ \frac{2r-1}{(1-t)/t}\frac{\abs{\ebdry^X A}}{\abs A}.
 \]
 Rearranging the terms we obtain:
 \[
  -h\leq (2r-1)\frac{\abs{\ebdry^X A}}{\abs{A}}
 \]
and hence
 \begin{equation}\label{eq:r_inequality}
  r\geq \frac{1}{2}\Bigparen{1-h\frac{\abs A}{\abs{\ebdry^X A}} } > \frac{1}{4},
 \end{equation}
 where the last inequality follows from the assumptions $h\leq\frac \epsilon 2$ and $\frac{\abs {\partial^X A}}{\abs A}>\epsilon$. We can therefore conclude that $A$ is not an $(\epsilon/4)$\=/\fol set of $Y$ because
\[
 \abs{\ebdry^Y A}=r\abs{\ebdry^X A} >\frac 14\abs{\partial^X A} >\frac{\epsilon}{4}\abs{A}.
 \]

\

The proof of the ``furthermore'' part is similar. As above, let $A\subset Z$ be such that $\abs{\partial^X A}>\epsilon\abs{A}$. Now there are two possibilities. If $\abs{Z\smallsetminus A}\leq\frac{1}{2}\abs{X}$ then we have an analogue of \eqref{eq.boundary.of.difference}:
\[
  h
  \leq\frac{\abs{\ebdry^X (Z\smallsetminus A)}}{\abs{Z\smallsetminus A}}
    = \frac{\abs{\ebdry^X Z}-\abs{\ebdry^Y A}+ \abs{\ebdry^Z A}}{\abs{Z\smallsetminus A}}
 \]
 and the same argument implies that $\partial^ZA>\frac{\epsilon}{4}\abs{A}$.

 On the other hand, if $\abs{Z\smallsetminus A}>\frac{1}{2}\abs{X}$ then $\abs{Y\sqcup A}<\frac{1}{2}\abs{X}$, and therefore we have
 \[
  h
  \leq \frac{\abs{\ebdry^X (Y\sqcup A)}}{\abs{Y\sqcup A}}
    = \frac{\abs{\ebdry^X Y}-\abs{\ebdry^Y A}+ \abs{\ebdry^Z A}}{\abs{Y}+\abs{A}}
    \leq h +\frac{\abs{\ebdry^Z A}- \abs{\ebdry^Y A}}{\abs{Y}+\abs{A}},
 \]
 from which it follows that $\abs{\ebdry^Z A}\geq \abs{\ebdry^Y A}$ and hence 
 $\abs{\ebdry^ZA}>\frac \epsilon 2 \abs{A}$.
\end{proof}

\begin{proof}[Proof of Theorem~\ref{thm:partition.no.small.folner}]
Let $X$ be a finite graph with no $\alpha$\=/small $\epsilon$\=/\fol sets. We will show that it can be partitioned as $X=X_1\sqcup \cdots \sqcup X_k$ where $k\leq\lfloor\frac{1}{\alpha}\rfloor$, all the $X_i$ are $\alpha$\=/big and they are $\delta$\=/expanders for $ \delta\coloneqq \frac{\epsilon}{4^k}$.

 The idea is to apply Lemma~\ref{lem:cutting.cheeger.sets} recursively: if $X$ is not an $\frac\epsilon 2$\=/expander then $h(X)\leq \frac\epsilon 2$ and there exists a $Y_0\subset X$ that realizes the Cheeger constant. Since $X$ has no $\alpha$\=/small $\epsilon$\=/\fol sets, we deduce that $\abs{Y_0}\geq \alpha\abs{X}$. Letting $Y_1\coloneqq X\smallsetminus Y_0$, we have a partition $X=Y_0\sqcup Y_1$ where both $Y_i$ are $\alpha$\=/large. Importantly, it follows from Lemma~\ref{lem:cutting.cheeger.sets} that $\frac\epsilon 4$\=/\fol sets of $Y_i$ are also $\epsilon$\=/\fol sets of $X$.
 
 Let us now focus on $Y_0$: if it is an $\frac{\epsilon}{2\cdot 4}$\=/expander there is nothing to do. Otherwise, we can choose $Y_{00}\subset Y_0$ realizing the Cheeger constant. Such $Y_{00}$ is an $\frac \epsilon 4$\=/\fol set in $Y_0$ and hence an $\epsilon$\=/\fol set in $X$. It follows that $\abs{Y_{00}}\geq\alpha\abs{X}$. On the other hand, $Y_{01}\coloneqq Y_0\smallsetminus Y_{00}$ is at least as large as $Y_{00}$ and hence $\abs{Y_{01}}\geq\alpha\abs{X}$. Using Lemma~\ref{lem:cutting.cheeger.sets} we deduce that the partition $Y_0=Y_{00}\sqcup Y_{01}$ is such that every $\frac{\epsilon}{4^2}$\=/\fol set in $Y_{0i}$ is an $\frac\epsilon 4$\=/\fol set in $Y_{0}$ and hence an $\epsilon$\=/\fol set in $X$.

 One can thus continue to partition the sets $Y_{i_0i_1\cdots i_k}$ that appear using this procedure. This process ends because $X$ is a finite graph and all the subsets $Y_{i_0i_1\cdots i_k}$ obtained during this process are $\alpha$\=/big in $X$. In particular, when the process ends one has partitioned $X$ into at most $\lfloor\frac{1}{\alpha}\rfloor$ sets $X_1,\ldots,X_k$. Moreover, the worst possible expansion constant is what is obtained by the longest chain of consecutive applications of Lemma~\ref{lem:cutting.cheeger.sets}. This gives rise to the---rather generous---lower bound $\delta\geq \frac{\epsilon}{4^k}$.
\end{proof}

\begin{rmk}
 By examining the end of the proof of Theorem~\ref{thm:partition.no.small.folner}, we see that the natural lower bound on $\delta$ is actually $\frac{\epsilon}{2\cdot 4^{k-1}}$. With some extra care, it is further possible to improve this to $\frac{\epsilon}{4^{k-1}}$ (if the partition process goes all the way and produces $k$ components, then they must automatically be $\epsilon/{4^{k-1}}$ expanders as any $\epsilon/{4^{k-1}}$\=/\fol set should be $\alpha$\=/large in $X$ and hence take more than half of the graph). It is also possible to improve the base of the exponential, making it arbitrarily close to $\frac 12$ (instead of $\frac 14$). This is accomplished by modifying the statement of Lemma~\ref{lem:cutting.cheeger.sets}: Inequality~\eqref{eq:r_inequality} shows that tighter control on $h$ directly yields a better asymptotic behaviour. The optimal parameters for Lemma~\ref{lem:cutting.cheeger.sets} would then depend on the expected value on $k$. However, such a fine tuning seems unnecessary, as there are other arguments that yield polynomial estimates (see below).
\end{rmk}

\begin{rmk}\label{rmk:previous.work}
 As already remarked, Theorem~\ref{thm:partition.no.small.folner} follows easily from a result of Oveis Gharam--Trevisan. For every $m\geq 1$ one can define a higher order Cheeger constant $\rho_m(X)$ as
 \[
  \rho_m(X)\coloneqq \min\Bigbraces{ \max_{1\leq i\leq m} \frac{\abs{\partial A_i}}{\abs{A_i}}\Bigmid A_1,\ldots, A_m\subset X\text{ disjoint}}.
 \]
 \cite[Theorem 1.5]{gharan2014partitioning} implies that when $\rho_m(X)>0$  one can always find a partition $X=X_1\sqcup\cdots\sqcup X_l$ for some $l\leq m-1$ where the graphs $X_i$ are $\zeta$\=/expanders for some $\zeta=\zeta(m,\rho_m(X),\deg(X))$. To prove Theorem~\ref{thm:partition.no.small.folner} it is then enough to note that if there are no $\alpha$\=/small $\epsilon$\=/\fol sets in $X$ and $m=\lfloor\frac{1}{\alpha}\rfloor+1$, then for any choice of $m$ disjoint sets $A_1,\ldots,A_m$ at least one of them will be smaller than $\alpha\abs{X}$ and hence $\rho_m(X)>\epsilon$. It will hence be possible to partition $X$ into at most $\lfloor\frac 1\alpha\rfloor$\=/many $\zeta$\=/expanders.
 
 The proof of Oveis Gharam--Trevisan appears to be somewhat more involved than the proof we gave (it follows from \cite[Theorem 1.7]{gharan2014partitioning}), but it has a few significant advantages: it gives a bound on the number of edges connecting the sets in the partition, it applies to weighted graphs\footnote{%
 This is not an important difference: it is not hard to modify our proof to cover this case as well.} and it produces asymptotically better constants.
 
 With regard to constants: we wrote that the constant $\zeta$ of Oveis Gharam--Trevisan depends on the degree of $X$ because what they actually estimate is the \emph{conductance}\footnote{%
 This includes some normalization terms that take into account the degrees of vertices. It is natural to consider the conductance when one is planning to use the spectral characterization of expansion (as Oveis Gharam--Trevisan do). In this note we preferred the approach via Cheeger constants because it is marginally simpler to introduce.}.
 In particular, this makes it hard to compare directly the constants that we obtain. It appears that our approach provides sharper estimates when $k$ is very small (\emph{i.e.} for large $\alpha$). On the other hand, our estimate degrades exponentially fast with $k$, while that of Oveis Gharam--Trevisan degrades only quadratically. 
 
 One small advantage of our proof is that it is not immediately clear from the result of Oveis Gharam--Trevisan that all the sets $X_1,\ldots, X_l$ appearing in the partition are $\alpha$\=/big.
\end{rmk}

 \bibliographystyle{abbrv}
 \bibliography{MainBibliography}

\begin{thebibliography}{10}

\bibitem{BOT10}
I.~Benjamini, O.~Schramm, and {\'A}.~Tim{\'a}r.
\newblock On the separation profile of infinite graphs.
\newblock {\em Groups, Geometry, and Dynamics}, 6(4):639--658, 2012.

\bibitem{coz2019separation}
C.~L. Coz and A.~Gournay.
\newblock Separation profiles, isoperimetry, growth and compression.
\newblock {\em arXiv preprint arXiv:1910.11733}, 2019.

\bibitem{gharan2014partitioning}
S.~O. Gharan and L.~Trevisan.
\newblock Partitioning into expanders.
\newblock In {\em Proceedings of the twenty-fifth annual ACM-SIAM symposium on
  Discrete algorithms}, pages 1256--1266. Society for Industrial and Applied
  Mathematics, 2014.

\bibitem{Gro93}
M.~Gromov.
\newblock {\em Geometric Group Theory (Asymptotic invariants)}, volume~2 of
  {\em London Mathematical Society Lecture Note Series}, chapter Asymptotic
  invariants of infinite groups, pages 1--295.
\newblock Cambridge University Press, 1993.

\bibitem{Gro03}
M.~Gromov.
\newblock Random walks in random groups.
\newblock {\em Geometric and Functional Analysis}, 13(1):73--146, 2003.

\bibitem{HLW06}
S.~Hoory, N.~Linial, and A.~Wigderson.
\newblock Expander graphs and their applications.
\newblock {\em Bulletin of the American Mathematical Society}, 43(4):439--561,
  2006.

\bibitem{Hum17}
D.~Hume.
\newblock A continuum of expanders.
\newblock {\em Fundamenta Mathematicae}, 238(2):143--152, 2017.

\bibitem{khukhro2019structure}
A.~Khukhro, K.~Li, F.~Vigolo, and J.~Zhang.
\newblock On the structure of asymptotic expanders.
\newblock {\em arXiv preprint arXiv:1910.13320}, 2019.

\bibitem{Tes09}
R.~Tessera.
\newblock {Coarse embeddings into a Hilbert space, Haagerup property and
  Poincar{\'{e}} inequalities}.
\newblock {\em Journal of Topology and Analysis}, 1(01):87--100, 2009.

\bibitem{Vig18Thes}
F.~Vigolo.
\newblock {\em Geometry of actions, expanders and warped cones}.
\newblock PhD thesis, University of Oxford, Trinity 2018.

\end{thebibliography}
\end{document}